\documentclass[11pt]{amsart}
\usepackage{amsmath,amssymb,amsthm}
\usepackage[hidelinks]{hyperref}
\newtheorem{thm}{Theorem}[section]

\newtheorem{cor}[thm]{Corollary}
\newtheorem{lem}[thm]{Lemma}

\theoremstyle{definition}

\numberwithin{equation}{section}

\def\Q{\mathbb{Q}}
\def\Z{\mathbb{Z}}
\def\al{\alpha}
\def\be{\beta}

\title[Perfect powers among Jacobsthal numbers]{Perfect powers among Jacobsthal numbers}

\author{Paulius Virbalas}
\address{Institute of Mathematics, Faculty of Mathematics and Informatics, Vilnius
University\\
Lithuania}
\email{paulius.virbalas@mif.vu.lt}
\subjclass[2020]{11B39, 11D61, 11D41}
\keywords{Jacobsthal numbers, perfect powers, Lucas sequences, ternary Diophantine equations}
\begin{document}
\begin{abstract}
Jacobsthal numbers are an example of Lucas sequence defined by a recurrence relation analogous to that of the Fibonacci numbers but with different parameters. In this paper, we show that the only perfect powers among Jacobsthal numbers are the trivial ones, namely $0$ and $1$. Using Binet formulas, our problem reduces to an exponential Diophantine equation in three unknowns. It is resolved by the modular approach, following the framework developed by Bennett and Skinner for ternary Diophantine equations. 
\end{abstract}

\maketitle
\noindent\textbf{Note added in Version 2.}
After the first version of this manuscript was posted, it was brought to the author's attention that the main result follows as a special case of Theorem 3 of Laishram, Ngairangbam and Maibam \cite{lnm}. Moreover, the Diophantine equation upon which this manuscript is centered follows from Lemma~2.6 of \cite{lnm}, and also from Theorem~2 of Bugeaud and Mignotte \cite{bm}.

\section{Introduction}\label{Intro}

A non-negative integer $w$ is called a perfect power if there exist non-negative integers $x$ and $m\geq2$ such that $w=x^m$. The cases $w=0$ and $w=1$ are referred to as trivial perfect powers. Determining perfect powers in linear recurrence sequences is a classical problem in number theory. The most prominent example is the Fibonacci numbers, defined by the recurrence
\[F_0=0,~F_1=1,\quad F_n=F_{n-1}+F_{n-2}\]
for all positive integers $n\geq2$. In 2006, a landmark paper by Bugeaud, Mignotte, and Siksek \cite{bms} finally resolved a long-standing conjecture by proving that the only perfect powers among the Fibonacci numbers are
\[F_0=0,\quad F_1=F_2=1,\quad F_6=8, \quad F_{12}=144.\]
This breakthrough subsequently inspired extensive research on related Diophantine equations involving Fibonacci numbers, such as $F_n+1=x^m$ or $F_n-F_k=2^m$ (see, e.g., \cite{f9,f11,f2,f1,f7,f6,f8,f5,f4,f3,f10}).\par
The Fibonacci numbers are a particular case of the family of Lucas sequences. Given two integer parameters $P$ and $Q$, the Lucas sequence $U_n(P,Q)$ of the first kind is defined by
\[
U_0(P,Q)=0,\quad U_1(P,Q)=1,\quad 
U_n(P,Q)=P\,U_{n-1}(P,Q)-Q\,U_{n-2}(P,Q)
\]
for $n\geq2$. Note that the Fibonacci numbers correspond to $U_n(1,-1)$. Closely related to $U_n(P,Q)$ is its companion sequence $V_n(P,Q)$, called Lucas sequence of the second kind. It is defined by
\[
V_0(P,Q)=2,\quad V_1(P,Q)=P,\quad 
V_n(P,Q)=P\,V_{n-1}(P,Q)-Q\,V_{n-2}(P,Q)
\]
for $n\geq2$. The first terms of $V_n(1,-1)$ are 
\[2,1,3,4,7,11,18,29,47,76, ...\]
The terms of $V_n(1,-1)$ are known as Lucas numbers. In the same paper of \cite{bms}, it was shown that the only perfect powers in $V_n(1,-1)$ are $1$ and $4$. Other well-known examples of Lucas sequences are Pell numbers, corresponding to $U_n(2,-1)$, and the Pell-Lucas numbers, corresponding to the companion sequence $V_n(2,-1)$. In 1991, Peth\H{o} \cite{pel1} proved that the only perfect powers among Pell numbers are $0,1$ and $169$ (a substantially simpler proof was later found by Cohn \cite{pel2}), while in 2015, Bravo, Das, Guzm\'an, and Laishram \cite{pel3} showed that there are no perfect powers among Pell-Lucas numbers. Other results on perfect powers in linear recurrence sequences can be found in \cite{pw7,pw5, pw6,pw4,pw1,pw2,pw3,pw9,pw8}.\par
The characteristic polynomial of the sequences $U(P,Q)$ and $V(P,Q)$ is 
\[\lambda(x)=x^2-Px+Q\]
with the roots
\[\al=\frac{P+\sqrt{D}}{2} \quad\text{and}\quad \be=\frac{P-\sqrt{D}}{2},\]
where $D$ is the discriminant of $\lambda(x)$. The Binet formulas show closed-form expressions for the $n$-th term of Lucas sequences. In particular
\[ U_n(P,Q) = \frac{\alpha^n - \beta^n}{\alpha - \beta} 
\quad\text{and}\quad 
V_n(P,Q) = \alpha^n + \beta^n.
\]
Thus, the problem of determining perfect powers in Lucas sequences, via their Binet
formulas, reduces to the study of Diophantine equations of the form 
\[
x^m=U_n(P,Q) \quad \text{or} \quad x^m=V_n(P,Q),\quad x\geq0,~m\geq2.
\]
Depending on the choice of parameters $P$ and $Q$, the complexity of such equations 
varies considerably. For example, in the Fibonacci case, the problem reduces to the Diophantine equation
\[
x^m=\frac{1}{\sqrt{5}}\left(\left(\frac{1+\sqrt{5}}{2}\right)^n - \left(\frac{1-\sqrt{5}}{2}\right)^n\right).
\]
The proof that this equation admits integer solutions with $m \geq 2$ only for $n=0,1,2,6,12$ required an innovative combination of lower bounds for linear forms 
in three logarithms with the modular approach, extending techniques that were originally developed in the proof of Fermat's Last Theorem. In contrast, the $n$-th term of the sequence $U_n(3,2)$ is given by $M_n=2^n - 1$, which is the sequence of Mersenne numbers. The corresponding Diophantine equation
\[
x^m=2^n - 1
\]
has no solutions with $m \geq 2$. This follows directly from Mih\u{a}ilescu's proof 
of Catalan's conjecture \cite{mih02}, which states that the only non-trivial consecutive perfect powers are $3^2$ and $2^3$.\par
In this paper, we consider Lucas sequences with $P=1$ and $Q=-2$. The terms of $U_n(1,-2)$ are called Jacobsthal numbers (after German mathematician Ernst Jacobsthal) and are denoted by $J_n$, while the terms of $V_n(1,-2)$ are called Jacobsthal-Lucas numbers and are denoted by $j_n$. 
The Jacobsthal numbers satisfy the recurrence relation
\[J_0=0,~J_1=1,\quad J_n=J_{n-1}+2J_{n-2},\quad n\geq 2.\]
The sequence $J_n$ begins with
\begin{equation}\label{Jl1}
0,1,1,3,5,11,21,43,85,171,\ldots,
\end{equation}
and admits the Binet formula
\begin{equation}\label{nth}
J_n=\frac{2^n-(-1)^n}{3}.
\end{equation}
The Jacobsthal-Lucas numbers satisfy the recurrence relation
\[j_0=2,~j_1=1,\quad j_n=j_{n-1}+2j_{n-2},\quad n\geq 2.\]
The sequence $j_n$ begins with
\begin{equation}\label{jl1}
2,1,5,7,17,31,65,127,257,511,\ldots,
\end{equation}
and admits the Binet formula
\begin{equation}\label{jl2}
j_n=2^n+(-1)^n.
\end{equation}
It is known that Jacobsthal and Jacobsthal-Lucas numbers have applications in coding theory and graph theory \cite{aj3,aj1,aj2}. Numerical properties of these sequences can be found in \cite{pj2,pj1}, while various generalizations are discussed in \cite{gj3, gj2, gj1}. Several Diophantine problems involving Jacobsthal numbers have been studied in \cite{dj1,dj2}.\par
In the present work, we contribute to the research on perfect powers in linear recurrence sequences by investigating separately the Diophantine equations 
\[x^m = J_n\quad\text{and}\quad x^m = j_n.\]
In fact, due to Mih\u{a}ilescu's \cite{mih02} proof of Catalan's conjecture, we know that the equation $x^m=2^n + (-1)^n$ has no solutions in positive integers for $m\geq 2$. In view of \eqref{jl1} and \eqref{jl2}, this means that the only perfect power among Jacobsthal-Lucas numbers is $j_1=1$, corresponding to $1^m=2^1-1$. We prove that the analogous conclusion holds for the Jacobsthal numbers, namely that the only perfect powers are the trivial ones. In view of \eqref{nth}, this is a consequence of the following result.
\begin{thm}\label{thmain}
The Diophantine equation
\begin{equation*}
3x^m=2^n-(-1)^n
\end{equation*}
has no integer solution $(x,m,n) \in \Z^3$ with $m\geq2$ and $n\geq3$. 
\end{thm}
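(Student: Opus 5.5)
First reduce to prime exponents: if $m$ has an odd prime factor $p$ we replace $x$ by $x^{m/p}$, and otherwise $4\mid m$ or $m=2$, so it suffices to treat $m=2$, $m=4$ (which follows from $m=2$) and $m=p\ge 3$ prime. Note that for $n\ge3$ the right-hand side $2^n-(-1)^n\ge 9$ is odd, so $x$ is odd with $|x|>1$, and we may take $x>0$ (for odd $m$ a negative $x$ would make the left side negative). We also rewrite the equation as
\[
3x^m + (-1)^n = 2^n ,
\]
which is a ternary equation $Ax^p+By^p=Cz^p$ with coefficients $A=3$, $B=C=1$ once the power of $2$ is absorbed.

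\emph{The case $m=2$.} Here $3x^2=2^n-(-1)^n$. If $n$ is even, then $3x^2=2^n-1\equiv -1\pmod 4$, so $x^2\equiv 3\cdot(-1)\equiv 1\pmod 4$; this gives no immediate contradiction. Instead we factor: $2^n-1=(2^{n/2}-1)(2^{n/2}+1)$, where the two factors are coprime and odd, so one of them is a square and the other is $3$ times a square. Since $2^k+1=y^2$ forces $k=3$ and $2^k-1=y^2$ forces $k\le1$ (as $2^k-1\equiv 3\pmod 4$ for $k\ge2$), a short check of the possibilities $n/2\in\{1,3\}$ eliminates every case with $n\ge3$ ($n=6$ gives $63$, which is not $3$ times a square). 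If $n$ is odd, then $3x^2=2^n+1$, and reducing modulo $8$ gives $3x^2\equiv 3$ while $2^n+1\equiv 1$ for $n\ge3$, a contradiction.

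\emph{The case $m=p\ge3$.} We write $2^n=2^r\cdot(2^q)^p$ with $n=qp+r$ and $0\le r<p$. This gives
\[
3x^p+(-1)^{n}\cdot 1^p=2^r (2^q)^p ,
\]
a generalized Fermat equation $Ax^p+By^p=Cz^p$ with $ABC=3\cdot 2^r$, to which the Bennett--Skinner framework applies (with $z$ even and $x,y$ odd, and $r\ge1$ or $q$ large; $n\ge3$ ensures that $2\mid Cz^p$). We attach the Frey curve
\[
E:\ Y^2=X(X-3x^p)(X+(-1)^n)
\]
suitably twisted and normalized as in Bennett--Skinner, and we invoke modularity together with Ribet level lowering. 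For $p$ larger than a small bound ($p\ge7$, with small primes handled separately) we obtain that $\bar\rho_{E,p}$ arises from a newform of weight $2$ and level $N_p\in\{6,12,18,24,36,48,72,96,144,288\}$, depending on $r$ and the parity of $n$. Many of these levels have no newforms, which kills those cases directly. For the remaining levels the newforms are rational and correspond to curves with rational $2$-torsion, and we must rule them out.

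The main obstacle is eliminating these surviving newforms. We would first try the standard trick: since $y=\pm1$, the Frey curve has a very special shape, and comparing $a_\ell(E)$ with $a_\ell(f)$ for small primes $\ell\nmid 6$ (for example $\ell=5,7,11,13$), using the fact that $x^p \bmod \ell$ takes few values, gives $p\mid B_\ell(f)$ with $B_\ell$ nonzero for some $\ell$. This bounds $p$ to a small set. If some newform corresponds to an actual solution with $xy=\pm1$ (such as $3\cdot1-1=2$ or $3+1=4$), the trace comparison fails. We would handle this by exploiting $\ell=p$ itself or image-of-inertia arguments at $3$ (the Frey curve has multiplicative reduction at $3$ when $3\mid x$, but not otherwise), or by using Kraus's method with auxiliary primes $\ell\equiv1\pmod p$. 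The remaining small exponents $p=3,5$ and the small residual primes would be dispatched by known results on $x^3$ and $x^5$ (cubic Thue equations $3x^3-2^r z^3=\pm1$ solved via Thue solvers) or by elementary congruences. Finally, we would check directly that small $n$ produce no solutions.
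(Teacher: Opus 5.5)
Your reduction to prime exponents, the elementary treatment of $m=2$ (factoring $2^{2k}-1$ for even $n$, reducing mod $8$ for odd $n$) and the cubic Thue equations for $p=3$ are fine. The gap is the case $m=p\ge5$, which is the heart of the theorem. You never actually compute the level, and the step that should produce the contradiction is a menu of techniques (``we would first try \ldots'', ``we would handle this by \ldots''), not an argument. Worse, the level list you write down is not what the Frey curve gives, and with it the plan cannot be finished uniformly in $p$. Levels such as $24$, $48$, $96$ do carry rational newforms. If, as you yourself anticipate, one of them comes from an identity like $3-1=2$, then comparing $a_\ell(E)$ with $a_\ell(f)$ gives no bound on $p$. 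Kraus's method with auxiliary primes $\ell\equiv1\pmod p$ only disposes of one $p$ at a time, so it needs such a bound first. Your remark that the Frey curve is multiplicative at $3$ only when $3\mid x$ is also wrong: $3$ divides the term $3x^p$ regardless, so $3$ divides the conductor exactly once, and levels $18,36,72,144,288$ never arise. The framework for $Ax^p+By^p=Cz^p$ is Kraus/Ribet/Halberstadt--Kraus, not Bennett--Skinner, whose paper treats signature $(p,p,2)$. Your treatment of $p=5$ is also left vague.

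The missing observation is that, correctly normalized, no newform survives at all. The cases $n=3,4$ are trivial ($x^p=3$ or $5$). For $n\ge5$, exactly one of the odd terms $3x^p$, $(-1)^n$ is $\equiv-1\pmod 4$. Calling it $a$ and putting $b=-2^n$ in $Y^2=X(X-a)(X+b)$ gives a semistable curve with full rational $2$-torsion and $\Delta_{\min}=2^{2n-8}3^2x^{2p}$. For $\ell\mid x$ with $\ell\ne3$ we have $p\mid v_\ell(\Delta_{\min})$, whereas $v_3(\Delta_{\min})=2+2p\,v_3(x)\not\equiv0\pmod p$. Then modularity, irreducibility for $p\ge5$ (Mazur, since $E$ is semistable with rational $2$-torsion) and Ribet's theorem give a newform of level $6$ or $3$, according as $p\nmid n-4$ or $p\mid n-4$. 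But $S_2(\Gamma_0(6))=S_2(\Gamma_0(3))=0$. Carried out this way, your signature-$(p,p,p)$ route would treat both parities of $n$ and all $p\ge5$ at once, leaving only the cubic Thue equations, which is more uniform than the paper. The paper handles even $n$ by the factorization $(2^k-1)(2^k+1)$ and Mih\u{a}ilescu's theorem. For odd $n$ it writes $3x^p-2^r(2^t)^p=1^2$ and uses the Serre--Mazur--Kraus result for $x^p+3y^p+z^p=0$ when $r=0$, and Bennett--Skinner's $(p,p,2)$ Frey curve (again level $6$) when $r\ge1$. It does $p=3,5$ by Thue equations. As submitted, though, your proof stops exactly where the contradiction has to come from.
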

By inspecting the values of $J_n$ for $n=0,1,2$ in \eqref{Jl1}, we obtain the following corollary. 
\begin{cor}\label{cormain}
The only perfect powers among the Jacobsthal numbers are $0$ and $1$. 
\end{cor}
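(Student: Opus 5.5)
We reduce first to the case where the exponent $m=p$ is a prime, and then solve the resulting ternary equation by the modular method in the framework of Bennett and Skinner.

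\emph{Reductions.} If $m\geq2$ has a prime divisor $p$, then $x^m=(x^{m/p})^p$. It therefore suffices to treat $m=p$ prime, allowing $x$ of either sign when $p$ is odd. Rewrite the equation as
\[
2^n-3x^p=(-1)^n .
\]
Since $n\geq3$, the right-hand side is odd, so $x$ is odd. Reducing modulo $8$, the left-hand side is $\equiv -3x^p \pmod 8$. Hence $3x^p\equiv -(-1)^n \pmod 8$, which restricts $x$ modulo $8$ but yields no contradiction by itself.

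\emph{The case $p=2$.} Here $3x^2=2^n\mp1$. If $n$ is even, then $3x^2=2^n-1$, so $(2^{n/2})^2-3x^2=1$. Reducing modulo $4$ gives $-3x^2\equiv1$, i.e.\ $x^2\equiv 1\pmod 4$, which is fine. Reducing modulo $3$ gives $2^n\equiv1$, also fine, so we need another argument. Factor instead: $(2^{n/2}-1)(2^{n/2}+1)=3x^2$. The two factors are coprime and odd, so one of them is a square and the other is $3$ times a square. But $2^{k}\pm1=y^2$ with $k\geq2$ is impossible (modulo $4$ for the $+$ sign, and $2^k=y^2+1\equiv 2\pmod 4$ for the $-$ sign), leaving only small $k$, which we check by hand. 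If $n$ is odd, then $3x^2=2^n+1$. Reducing modulo $8$ gives $3x^2\equiv3$, consistent. Reducing modulo $3$ is fine as well. Now write $2^n+1=2y^2+1$ with $y=2^{(n-1)/2}$, so $3x^2-2y^2=1$. This is a Pell-type equation, so we should instead use the fact that $x^2=J_n$ is a square. The cleanest route is to cite the classical result that the squares in the Lucas sequence $U_n(1,-2)$ are known (Cohn, Ribenboim--McDaniel), or to argue directly using Jacobi symbols: $J_n\equiv 3\pmod 8$ for odd $n\ge3$? Checking, $J_3=3$ and $J_5=11$, both $\equiv3\pmod8$, and indeed for odd $n\geq3$, $J_n=(2^n+1)/3\equiv 3\cdot 1\equiv 3\pmod 8$ because $3^{-1}\equiv 3 \pmod 8$. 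A square is never $\equiv3\pmod 8$, so this case is done. Similarly $p=3$ can be handled by classical results on $3x^3=2^n\pm1$ (reduce to Thue equations $3X^3-4Y^3=\pm1$, etc., with $n$ split modulo $3$), or directly by Bennett--Skinner-type results for signature $(p,p,2)$ being unavailable. Small primes $p\le 7$ may also need ad hoc treatment.

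\emph{Main step, $p\geq5$ (or $p\geq7$).} Write $n=pk+r$ with $0\le r<p$. The equation becomes a generalized Fermat equation of signature $(p,p,p)$,
\[
2^r\,(2^k)^p - 3\,x^p = (-1)^n\cdot 1^p,
\]
i.e.\ $Ax^p+By^p+Cz^p=0$ with $ABC=2^r\cdot3$. When $k\ge1$ and $2^r y^p$ carries enough powers of $2$ (we need $v_2\ge 5$ overall, which holds once $pk+r\geq5$), the Kraus/Ribet/Halberstadt--Kraus results for $Ax^p+By^p+Cz^p=0$ apply. Attach the Frey curve $E: Y^2=X(X-2^n)(X+(-1)^n)$ (up to normalisation). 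Its minimal discriminant is essentially $2^{2n-8}3^{2}x^{2p}$, so by modularity and level lowering, $\bar\rho_{E,p}$ arises from a weight-$2$ newform of level $N=2\cdot 3=6$ or $N=3$ (depending on the exact $2$-adic valuation; with $v_2$ large the conductor at $2$ drops to $1$ or $2$). There are no newforms of level $1,2,3,6$, which gives a contradiction. Irreducibility of $\bar\rho_{E,p}$ for $p\geq5$ follows from Mazur, since $E$ has full $2$-torsion.

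The main obstacle is the bookkeeping: computing the conductor of the Frey curve exactly at $2$ in all residue cases (ensuring the level after lowering is $6$ or smaller, where $S_2$ is zero), and handling the small primes $p=3,5,7$ and the small $n$ where $v_2$ is insufficient, which needs separate elementary or Thue-equation arguments.
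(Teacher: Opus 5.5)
Your argument is correct in substance, but its core step differs from the paper's. The paper treats even $n$ for all exponents by factoring $(2^k-1)(2^k+1)$ and invoking Mih\u{a}ilescu, and rules out even exponents for odd $n$ modulo $4$. For prime $m\ge7$ it then splits $n=tm+r$ into two cases:
\begin{itemize}
\item $r=0$ is the Serre--Mazur--Kraus equation $x^m+3y^m+z^m=0$ (Lemma~\ref{smk});
\item $r\neq0$ is fed into Bennett--Skinner's signature $(m,m,2)$ machinery with $z=1$ (Lemma~\ref{wiles}, level $6$).
\end{itemize}
The cases $m\in\{3,5\}$ go to eight Thue equations.

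You instead use a single signature $(p,p,p)$ Frey curve attached to $2^n-3x^p-(-1)^n=0$, for every prime $p\ge5$ and both parities of $n$. The bookkeeping you flag as the main obstacle is actually easy. For $n\ge5$, the two odd terms $-3x^p$ and $-(-1)^n$ sum to $-2^n\equiv0\pmod 4$, so one of them is $\equiv-1\pmod 4$. Taking that term as $A$ and $B=2^n$ gives a model semistable at $2$ with $v_2(\Delta_{\min})=2n-8$. Also $v_3(\Delta_{\min})=2+2p\,v_3(x)\not\equiv0\pmod p$, so $3$ stays in the level. Level lowering therefore lands at $N_p=3$ (when $p\mid n-4$) or $N_p=6$, and $S_2(\Gamma_0(3))=S_2(\Gamma_0(6))=0$. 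Your irreducibility claim from full $2$-torsion is right for $p\ge5$: a $p$-isogeny would give a rational cyclic $4p$-isogeny on an isogenous curve, which Mazur--Kenku excludes.

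What your route buys is one self-contained Frey-curve argument. It replaces Lemma~\ref{smk}, Lemma~\ref{wiles} and the appeal to Catalan for odd exponents. It also makes the decomposition $n=pk+r$ unnecessary and handles $p=5$ without Thue equations. What remains is the elementary case $p=2$ and the case $p=3$, which uses the same three Thue equations $3X^3-2^rY^3=\pm1$ as the paper (the signs are equivalent under $(X,Y)\mapsto(-X,-Y)$). The paper's route buys ready-made citations with no conductor computation, at the cost of more black boxes and the restriction $m\ge7$.

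A few slips need repair:
\begin{itemize}
\item As written, $Y^2=X(X-2^n)(X+(-1)^n)$ has root differences $2^n$, $(-1)^n$ and $2^n+(-1)^n=j_n$, so its discriminant does not involve $3x^p$. You need $X(X-2^n)(X-(-1)^n)$, normalised as above.
\item In the case $p=2$, $n=2k$, the equation $2^k+1=y^2$ is not excluded modulo $4$; indeed $2^3+1=3^2$. Either solve $(y-1)(y+1)=2^k$ to get $k=3$ and note $2^3-1=7\neq 3z^2$, or observe that $2^k-1=3z^2$ is impossible modulo $8$ for $k\ge3$.
\item Your $2$-adic condition needs $n\ge5$, so $n=3,4$ must be checked directly: $J_3=3$ and $J_4=5$.
\end{itemize}
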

The main case of Theorem~\ref{thmain} is to prove that for $m\geq2$ the 
Diophantine equation
\[
3x^m=2^n+1
\]
has no integer solutions when $n\geq3$ is odd. By transforming this equation into
\begin{equation}\label{me}
3x^m-2^r\cdot (2^t)^m=1^2,
\end{equation}
we treat it in Lemma~\ref{wiles} by applying the results of Bennett and Skinner \cite{bs04} 
on ternary Diophantine equations of the form
\begin{equation}\label{put}
Ax^m+By^m=Cz^2.
\end{equation}
In their pioneering work, Bennett and Skinner showed that to any primitive 
solution $(x,y,z)=(a,b,c)$ of \eqref{put} with $m\geq7$ an odd prime, one can associate an elliptic 
curve $E$, called a Frey curve, whose discriminant is an $m$-th power up to a 
constant factor. Then, via Ribet's level-lowering techniques, they further showed that 
the corresponding Galois representation arises from a cuspidal newform of 
weight $2$ and some relatively small level $N$. For the equation \eqref{me} related to Jacobsthal numbers, we obtain $N=6$. Since it is 
known that there are no newforms of weight $2$ and level $6$, it follows that 
there is no non-trivial solution to \eqref{me}. The cases with small exponents, namely 
$m<7$, are handled by reducing them to Thue equations and solving them with a 
computer algebra system, similarly as in \cite[pp. 75]{bms}.\par
The paper is structured as follows. In Section~\ref{lem}, we prove two key lemmas, which allow for a more concise and transparent proof of Theorem~\ref{thmain} in Section~\ref{prf}.

\section{Lemmas}\label{lem}
If $n$ is odd, the $n$-th Jacobsthal number is given by
\[
J_n=\frac{2^n+1}{3}.
\]
Thus, if $x^m=J_n$, then the problem converts to the study of Diophantine equation
\[
3x^m-2^n=1.
\]
The following lemma treats the case $n=m$. 

\begin{lem}\label{smk}
Let $m\ge 5$ be a prime. Then the Diophantine equation
\begin{equation}\label{smkp}
3x^m-y^m=1
\end{equation}
has no integer solutions $(x,y)\in\Z^2$ with $xy\neq0$.
\end{lem}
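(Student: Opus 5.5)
The plan is to treat \eqref{smkp} as a generalized Fermat equation of signature $(m,m,m)$ and apply the modular method, as in Ribet's and Kraus's treatment of $Aa^m+Bb^m+Cc^m=0$. Rewrite \eqref{smkp} as
\[
3x^m+(-y)^m+(-1)\cdot 1^m=0,
\]
so $(a,b,c)=(x,-y,-1)$ is a solution of $Aa^m+Bb^m+Cc^m=0$ with $(A,B,C)=(3,1,1)$. The triple is automatically pairwise coprime because $c=\pm1$. I would then check the remaining hypotheses needed for the Frey curve.

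\emph{Parity.} If $x$ and $y$ were both odd, then $3x^m-y^m$ would be even, which is impossible. If both were even, the left side would be even. Hence exactly one of $x,y$ is even.

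\emph{Trivial solutions.} The condition $xy\ne0$ together with a check of $x,y\in\{\pm1\}$ (for example $3-1=2\ne1$) shows that $abc\ne\pm1$ can be assumed. So the solution is non-trivial in the sense of Kraus.

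After permuting and changing signs if necessary, label the even term as $B b^m$ with $2^5\mid Bb^m$ (this holds since $m\ge5$), and arrange $Aa^m\equiv -1 \pmod 4$. Attach the Frey curve
\[
E:\ Y^2=X(X-Aa^m)(X+Bb^m).
\]
Its minimal discriminant is $2^{-8}(ABC)^2(abc)^{2m}$, up to the standard normalisation. By the computation of Kraus (or Bennett--Skinner's tables in the analogous setting), the conductor is $N_E=2\cdot 3\cdot\prod_{\ell\mid ab,\ \ell\ne2,3}\ell$, and $E$ is semistable away from $3$.

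Next I would verify that $\bar\rho_{E,m}$ is irreducible for $m\ge5$. The curve $E$ has full rational $2$-torsion, so a rational $m$-isogeny would give a rational cyclic isogeny of degree $4m$ (or a point of order $2m$). Mazur's theorem and Kenku's results rule this out for $m\ge5$. Then Wiles/Breuil--Conrad--Diamond--Taylor modularity and Ribet's level-lowering show that $\bar\rho_{E,m}$ arises from a weight-$2$ newform of level $N=\prod_{\ell\,\|\,N_E,\ m\nmid v_\ell(\Delta)}\ell$. The odd primes $\ell\ne3$ dividing $ab$ drop out because their valuations in $\Delta$ are multiples of $m$. The prime $2$ stays with multiplicity one, and so does $3$, since $v_3(\Delta)=2+2m\,v_3(abc)$ is not divisible by $m$. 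This gives $N=6$. Since $S_2(\Gamma_0(6))$ has no newforms (the genus of $X_0(6)$ is $0$), we get a contradiction.

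The main obstacle is the $2$-adic bookkeeping: one must show that the chosen model is minimal at $2$ and that $E$ has multiplicative reduction there, so that $2$ appears to the first power rather than $2^3$ or higher in the lowered level. This depends on $v_2(\text{even term})\ge5$ and on the congruence normalisation, and I would check it carefully against Kraus's conductor formula. The case $m=5$ is covered because the irreducibility argument above only needs $m\ge5$, not $m\ge7$.
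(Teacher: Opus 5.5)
Your proof is correct, but it does more than the paper does for this lemma. The paper builds no Frey curve here. It rewrites $3a^m-b^m=1$ as $(-b)^m+3^1\cdot a^m+(-1)^m=0$ and quotes Cohen's Theorem~15.5.3: the Serre--Mazur--Kraus equation $x^m+L^ry^m+z^m=0$ with $L=3$, $r<m$ and $m\ge5$ prime has no non-trivial primitive solutions.

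The proof of that cited theorem is the signature $(m,m,m)$ argument you sketch. It uses the Frey curve $Y^2=X(X-Aa^m)(X+Bb^m)$, gets irreducibility from full rational $2$-torsion together with Mazur--Kenku, lowers the level to $2L=6$, and concludes from $S_2(\Gamma_0(6))=0$. So you are in effect reproving the case $L=3$, $r=1$ of the cited result.

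The paper's route buys brevity at the cost of a black box. Yours is self-contained apart from modularity and Ribet's theorem, and it shows why $m\ge5$ suffices here. The contrast is with the Bennett--Skinner signature $(m,m,2)$ argument of Lemma~\ref{wiles}, where only one rational $2$-torsion point is available and $m\ge7$ is needed.

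Two harmless slips:
\begin{itemize}
\item $E$ is in fact semistable at $3$ as well. Only $3a^m$ is divisible by $3$, so the reduction there is multiplicative, as your squarefree $N_E$ already indicates.
\item The displayed model is not minimal at $2$. The minimal model comes from $X=4X'$, $Y=8Y'+4X'$. This change of variables produces the factor $2^{-8}$ in your discriminant and gives $v_2(\Delta_{\min})=2m\,v_2(\text{even variable})-8$, which is positive and not divisible by $m$. That is exactly what keeps $2$ in the lowered level.
\end{itemize}
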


\begin{proof}
Following Cohen \cite{cohen}, we refer to Diophantine equations of the form
\begin{equation}\label{smke}
x^m+L^r y^m+z^m=0,
\end{equation}
where $r<m$ and $m\ge5$ is prime, as Serre-Mazur-Kraus equations. According to \cite{cohen}, the solution $(x,y,z)=(a,b,c) \in \Z^3$ to \eqref{smke} is called non-trivial primitive if $abc\neq0$ and $a,b,c$ are pairwise coprime. From \cite[Theorem~15.5.3]{cohen} follows that equation \eqref{smke} has no non-trivial primitive solutions for $L=3$. Now suppose on the contrary, that $(x,y)=(a,b)$ is a solution to \eqref{smkp} with $ab\neq0$. Thus
\[3a^m-b^m=1,\]
which can be rewritten as
\[
(-b)^m+3^1\cdot a^m+(-1)^m=0.
\]
However, that implies that $(x,y,z)=(-b,a,-1)$ is a non-trivial primitive solution to \eqref{smke} with $L=3$ and $r=1$, a contradiction. Hence the proof is complete.
\end{proof}

The following lemma is necessary to tackle the case $n\ne m$. Its proof proceeds via the modular approach following the algorithm described by Bennett and Skinner \cite{bs04} to systematically investigate Diophantine equations of the form
\begin{equation}\label{ter}
Ax^m+By^m=Cz^2,
\end{equation}
where $m\ge 7$ is prime, and $A, B, C$ are fixed non-zero pairwise coprime integers. The ideas underlying the modular approach rely on deep results of Wiles \cite{wil}, Ribet \cite{rib}, Mazur \cite{maz}, and others (see, e.g., \cite{tay,dar,kub}). Accordingly, the introduction to the methods used requires a substantial amount of background. Since our argument is essentially a direct application of the algorithm, we do not reproduce the definitions or technical details here and instead refer the reader to \cite{bs04}.\par
According to \cite{bs04}, a solution $(x,y,z)=(a,b,c)\in \Z^3$ to \eqref{ter} is called primitive if $Aa$, $Bb$, and $Cc$ are all non-zero and pairwise coprime.
\begin{lem}\label{wiles}
Let $r$ be a fixed positive integer, and let $m\ge 7$ be prime. Then the Diophantine equation
\begin{equation}\label{mn}
3x^m-2^r y^m=z^2
\end{equation}
has no primitive integer solutions $(x,y,z)\in\mathbb{Z}^3$ such that $xy\neq1$ and $y$ even.
\end{lem}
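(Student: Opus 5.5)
We will apply the Bennett--Skinner machinery \cite{bs04} to $Ax^m+By^m=Cz^2$ with $A=3$, $B=-2^r$, $C=1$. A sign change $y\mapsto -y$ absorbs the minus sign, since $m$ is odd. To use the Bennett--Skinner framework we must be in their normalized setting. Suppose $(a,b,c)$ is a primitive solution with $ab\neq 1$ and $b$ even. Then $c$ is odd and $a$ is odd. We may reduce $r$ modulo $m$ by absorbing powers of $2^m$ into $y$, so $0<r<m$ (the case $r\equiv 0$ is handled the same way with $b$ replaced by $2^{r/m}b$, which is still even). The key arithmetic fact is that $2^r b^m$ is divisible by $2^{r+m}$ with $r+m\ge 7$. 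In the Bennett--Skinner case analysis ($ab\equiv -BC \pmod 4$ versus $2^6 \mid Bb^m$, etc.) this places us in the case where the Frey curve
\[
E:\ Y^2+XY = X^3+\frac{c-1}{4}X^2+\frac{2^r b^m}{64}X
\]
(after choosing the sign of $c$ so that $c\equiv 1 \pmod 4$, or an appropriate twist) has multiplicative, hence semistable, reduction at $2$. Its minimal discriminant is, up to sign, $3^2\cdot 2^{2r-12}\cdot(ab^2)^m$ or similar. Its conductor is then $N=\mathrm{rad}(3ab)$ times a factor at $2$ equal to $2$.

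Next we invoke modularity (Wiles et al.) and irreducibility of $\overline{\rho}_{E,m}$. Bennett--Skinner prove irreducibility for $m\ge 7$ under mild conditions. Here $E$ has a rational $2$-torsion point, so Mazur's theorem on isogenies gives irreducibility for $m\ge 7$ except possibly small cases, which we rule out using the $2$-torsion together with $m\ge 7$ prime. Ribet's level lowering then strips every odd prime $p\ne 3$ dividing $ab$, because $p$ appears in the discriminant to a power divisible by $m$. The same happens for the prime $2$ unless $2r-12\not\equiv 0\pmod m$; the relevant Bennett--Skinner table entry gives level $N_m = 2\cdot 3 = 6$ (the factor $\mathrm{rad}(AB)$ with $2$ kept to exponent $1$, and $3$ appearing because $3$ divides the discriminant to exponent not divisible by $m$). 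So $\overline{\rho}_{E,m}$ arises from a weight $2$ cuspidal newform of level $6$.

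Since $\dim S_2(\Gamma_0(6))=0$ (genus of $X_0(6)$ is $0$), there are no such newforms, a contradiction. The hypothesis $ab\neq 1$ matters because Bennett--Skinner exclude the degenerate solution where $E$ has CM or the discriminant unit part collapses. In our normalization, $ab=\pm1$ would make $(ab^2)^m$ trivial, and this case is excluded by hypothesis (with $b$ even, $ab\ne\pm1$ anyway).

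The main obstacle is the bookkeeping at $2$. We must check that the parity hypotheses ($b$ even, hence $2^{r+m}\mid Bb^m$ with $r+m\ge 7$) put us in the Bennett--Skinner case with conductor exponent $1$ at $2$ rather than $2^5$ or $2^8$. We must also verify that the exponent of $3$ in $\Delta_{\min}$ is not a multiple of $m$, so that the level is exactly $6$ and not something with nonzero newform space. We would read this off directly from Lemma~2.1 and Theorem~1.5 of \cite{bs04}.
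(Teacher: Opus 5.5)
Your proposal is correct and follows the same route as the paper: place the solution in Bennett--Skinner class (v) using $b$ even and $c\equiv 1 \pmod 4$, attach the Frey curve $Y^2+XY=X^3+\frac{c-1}{4}X^2+\frac{BCb^m}{64}X$, get irreducibility from \cite[Corollary~3.1]{bs04}, lower the level to $6$, and conclude because there are no weight-$2$ newforms of level $6$. A few small corrections, none of which affects the conclusion: the discriminant is $3\cdot 2^{2r-12}(ab^2)^m$, since $A$ enters to the first power, not $3^2$. When $r\equiv 6\pmod m$ the prime $2$ is also stripped and the level is $3$, which likewise has no newforms. Finally, for $m=7$ the rational $2$-torsion alone does not exclude a $7$-isogeny, because $X_0(14)$ has non-cuspidal rational points; you need \cite[Corollary~3.1]{bs04} or the multiplicative reduction of $E$ at $2$ to rule them out.
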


\begin{proof}
Observe that \eqref{mn} matches the form of \eqref{ter} with
\[
A=3,\quad B=-2^r,\quad C=1.
\]
Suppose on the contrary, that there exist a primitive integer solution $(x,y,z)=(a,b,c)$ to \eqref{mn} such that $ab\neq1$ and $b$ is even . Thus
\begin{equation}\label{sol}
3a^m-2^r b^m=c^2.
\end{equation}
According to the classification given in \cite[Chapter~2]{bs04}, equation \eqref{sol} falls into class (v), characterized by the properties
\[\operatorname{ord}_2(Bb^m)\geq6\quad\text{and}\quad c\equiv C \pmod 4.\]
The condition $\operatorname{ord}_2(Bb^m)\geq6$ is satisfied due to the assumption that $b$ is even, and $m\geq7$. With respect to $c$, note that $c^2$ is necessarily odd (due to $3a^m$ and $2^rb^m$ being coprime) so that if $c$ is not equal to $1$ modulo $4$, then $c$ can be substituted for $-c$ to satisfy the condition $c\equiv C \pmod 4$. The algorithm proceeds further by associating to equations from class (v) a Frey elliptic curve
\begin{equation*}
E~:~Y^2+XY=X^3+\frac{cC-1}{4}X^2+\frac{BCb^m}{64}X
\end{equation*}
which in our case becomes
\begin{equation}\label{e3}
E~:~Y^2+XY=X^3+\frac{c-1}{4}X^2-\frac{2^r b^m}{64}X.
\end{equation}
To the elliptic curve $E$, according to \cite[Lemma 3.2-3.3]{bs04} we associate a number $N_m(E)$ (related to the conductor $N(E)$ of $E$), which in our case is obtained as follows
\begin{equation}\label{lvl}
N_m(E)=\prod_{p|C,~ p\neq m} p^2 \cdot \prod_{q|AB, ~q \neq m} q=3\cdot 2 =6,
\end{equation}
since in our case $C=1$ and $AB=-3\cdot 2^r$ with $r\geq1$.
Let $\rho_m^E$ be a Galois representation 
\[\rho_m^E:\operatorname{Gal}(\overline{\Q}/{\Q}) \to \operatorname{GL}_2(\mathbb{F}_m) \]
arising from the action on the $m$-torsion points $E[m]$ of $E$. Since $m\geq7$ is prime, and $ab\neq1$, \cite[Corollary~3.1 ]{bs04} implies that the representation $\rho_m^E$ is irreducible. Consequently, by \cite[Lemma 3.2 ]{bs04}, the representation $\rho_m^E$ arises from a cuspidal newform of weight $2$, level $N_m^E$, and trivial Nebentypus character. From \eqref{lvl} we have that $N_m^E=6$. However, this leads to a contradiction as there are no newforms of weight $2$ and level $6$ \cite[Proposition 4.1]{bs04}. Hence the proof is complete.
\end{proof}

\section{Proof of the Main Result}\label{prf}
\begin{proof}[Proof of Theorem~\ref{thmain}]
Suppose on the contrary, that Diophantine equation
\begin{equation}\label{de}
3x^m=2^n-(-1)^n
\end{equation}
has at least one integer solution $(x,m,n) \in \Z^3$ with $m\geq2$ and $n\geq3$. \par 
We begin with the case when $n$ is even. Thus, \eqref{de} becomes $3x^m=2^n-1$. Since $n\geq3$, we have that $n=2k$ for some positive integer $k>1$. Consequently  
\[3x^m=(2^{k}-1)(2^{k}+1).\]
It is clear that $x$ must be odd. Hence $2^{k}-1$ and $2^{k}+1$ are coprime. Therefore
\[2^{k}-1=3^sx_1^m\quad\text{and}\quad 2^{k}+1=3^{1-s} x_2^m,\]
where $s\in \{ 0,1\}$. Consequently, either $2^k-1=x_1^m$ or $2^k+1=x_2^m$. From Mih\u{a}ilescu's proof of Catalan's conjecture \cite{mih02} it follows that the only possible solution to either of these equations is $2^3+1=3^2$. However, if $k=3$, then $x^m=21$, which is impossible due to $m\geq2$. This settles the case when $n$ is even.\par
We continue with the case when $n$ is odd. Thus, \eqref{de} becomes 
\begin{equation}\label{refq}
3x^m=2^n+1.
\end{equation}
Since $2^n+1 \equiv 1 \pmod{4}$ it follows that $3x^m\equiv 1 \pmod{4}$. This, and the fact that $x$ is clearly odd, imply that $m$ is odd too. If $m$ is not prime, then $x^m=(x^l)^p$ for some prime odd divisor of $m$, which means that from any solution $(x,m,n)$ one can obtain a solution $(x^l,p,n)$. Therefore, it is sufficient to prove that \eqref{refq} has no solutions of the required type, when $m$ is an odd prime. 
Also, due to $3x^m=2^n+1\geq 2^2+1=5$, and the fact that $x$ is odd, we can assume that $x>2$. Thus 
\[2^n+1=3x^m > 3 \cdot 2^m.\]
Therefore, $n>m$. Write $n=tm+r$, where $t\geq1$ is some positive integer and $r \in \{0, \ldots, m-1\}$. Then \eqref{refq} can be rewritten as
\begin{equation}\label{rew}
3x^m-2^r\cdot (2^t)^{m}=1.
\end{equation}
First we deal with the case, when $m\geq7$ (also recall that $m$ can be assumed to be prime). If \eqref{rew} 
holds for $r=0$ with some $x=a$, then $(x,y)=(a,2^t)$ is a solution to 
\begin{equation}\label{sup1}
3x^m-y^m=1
\end{equation}
with $xy\neq0$, which was shown to be impossible in Lemma~\ref{smk}. If \eqref{rew} 
holds with $r \in \{1,\ldots, m-1\}$ with some $x=a$, then $(x,y,z)=(a,2^t,1) \in \Z^3$ is a solution 
\begin{equation}\label{sup2}
3x^m-2^r y^m=z^2
\end{equation}
with $xy\neq1$ and $y$ even. Moreover, this solution is primitive, as
as $3a, 2^r\cdot 2^t, 1$ and are clearly non-zero and pairwise coprime (recall that $a$ is odd and $t\geq1$). However, this is again a contradiction, as in Lemma~\ref{wiles} it was demonstrated, that such solutions do not exist.\par  
To conclude the proof, it remains to consider equation \eqref{rew} when $m<7$ is an odd prime, i.e, $m \in \{3,5\}$. In order to treat these cases it suffices to solve eight Thue equations with any suitable computer algebra system. Indeed, if $m=5$, then we have five equations to consider, namely
\[3x^5-2^r\cdot y^5=1,~~\text{where}~~r=0,1,2,3,4. \]
If $m=3$, then we have three equations
\[3x^3-2^r\cdot y^3=1,~~\text{where}~~r=0,1,2. \]
By solving these equations in the computer algebra system PARI/GP (for example, $3x^5-4y^5=1$ is solved using the command \verb|thue(3*x^5-4,1)|) we find that 
the only possible solutions $(x,y)\in\Z^2$ that satisfy at least one of these equations are $(1,1)$, $(-1,-1)$ or $(0,-1)$. However, since $y=2^t$ with $t\geq1$, we always have $2^t\neq \pm 1$. This completes the proof of the theorem.
\end{proof}

\begin{proof}[Proof of Corollary~\ref{cormain}]
The $n$-th Jacobsthal number is given by the formula
\[J_n=\frac{2^n-(-1)^n}{3},\quad n=0,1,2\ldots\]
If $x^m=J_n$ for some $m\geq2$, then 
\[3x^m=2^n-(-1)^n.\]
By Theorem~\ref{thmain}, this equation has no integer solutions with $m\geq2$ and $n\geq3$. Thus, no $J_n$ is a perfect power when $n\geq3$. On the other hand
\[J_0=0,~J_1=1,~J_2=1.\]
Therefore, $0$ and $1$ are the only perfect powers among the Jacobsthal numbers.
\end{proof}
\bibliographystyle{acm}
\bibliography{references}

@article {lnm,
    AUTHOR = {Laishram, Shanta and Ngairangbam, Sudhir Singh and Maibam,
              Ranjit Singh},
     TITLE = {Yet another generalization of {S}ylvester's theorem and its
              application},
   JOURNAL = {Publ. Math. Debrecen},
  FJOURNAL = {Publicationes Mathematicae Debrecen},
    VOLUME = {95},
      YEAR = {2019},
    NUMBER = {1-2},
     PAGES = {1--17},
      ISSN = {0033-3883,2064-2849},
   MRCLASS = {11N13 (11B25 11B39 11D61)},
  MRNUMBER = {3998023},
MRREVIEWER = {Florian\ Luca},
       DOI = {10.5486/pmd.2019.8217},
       URL = {https://doi.org/10.5486/pmd.2019.8217},
}

@incollection {bm,
    AUTHOR = {Bugeaud, Yann and Mignotte, Maurice},
     TITLE = {On the {D}iophantine equation {$(x^n-1)/(x-1)=y^q$} with
              negative {$x$}},
 BOOKTITLE = {Number theory for the millennium, {I} ({U}rbana, {IL}, 2000)},
     PAGES = {145--151},
 PUBLISHER = {A K Peters, Natick, MA},
      YEAR = {2002},
      ISBN = {1-56881-126-8},
   MRCLASS = {11D61},
  MRNUMBER = {1956223},
MRREVIEWER = {Natarajan\ Saradha},
}

@book {cohen,
    AUTHOR = {Cohen, Henri},
     TITLE = {Number theory. {V}ol. {II}. {A}nalytic and modern tools},
    SERIES = {Graduate Texts in Mathematics},
    VOLUME = {240},
 PUBLISHER = {Springer, New York},
      YEAR = {2007},
     PAGES = {xxiv+596},
      ISBN = {978-0-387-49893-5},
   MRCLASS = {11-01 (11D61 11F80 11J86 11Mxx)},
MRREVIEWER = {R.\ C.\ Baker},
}

@article {bs04,
    AUTHOR = {Bennett, Michael A. and Skinner, Chris M.},
     TITLE = {Ternary {D}iophantine equations via {G}alois representations
              and modular forms},
   JOURNAL = {Canad. J. Math.},
  FJOURNAL = {Canadian Journal of Mathematics. Journal Canadien de
              Math\'ematiques},
    VOLUME = {56},
      YEAR = {2004},
    NUMBER = {1},
     PAGES = {23--54},
      ISSN = {0008-414X,1496-4279},
   MRCLASS = {11D41 (11F11 11F80)},
MRREVIEWER = {Henri\ Darmon},
       DOI = {10.4153/CJM-2004-002-2},
       URL = {https://doi.org/10.4153/CJM-2004-002-2},
}

@article {mih02,
    AUTHOR = {Mih\u{a}ilescu, Preda},
     TITLE = {Primary cyclotomic units and a proof of {C}atalan's
              conjecture},
   JOURNAL = {J. Reine Angew. Math.},
  FJOURNAL = {Journal f\"ur die Reine und Angewandte Mathematik. [Crelle's
              Journal]},
    VOLUME = {572},
      YEAR = {2004},
     PAGES = {167--195},
      ISSN = {0075-4102,1435-5345},
   MRCLASS = {11D61 (11R18 11R27)},
MRREVIEWER = {Ren\'e\ Schoof},
       DOI = {10.1515/crll.2004.048},
       URL = {https://doi.org/10.1515/crll.2004.048},
}

@article {bms,
    AUTHOR = {Bugeaud, Yann and Mignotte, Maurice and Siksek, Samir},
     TITLE = {Classical and modular approaches to exponential {D}iophantine
              equations. {I}. {F}ibonacci and {L}ucas perfect powers},
   JOURNAL = {Ann. of Math. (2)},
  FJOURNAL = {Annals of Mathematics. Second Series},
    VOLUME = {163},
      YEAR = {2006},
    NUMBER = {3},
     PAGES = {969--1018},
      ISSN = {0003-486X,1939-8980},
   MRCLASS = {11D61 (11B39 11D59 11J86)},
MRREVIEWER = {Yuri\ Bilu},
       DOI = {10.4007/annals.2006.163.969},
       URL = {https://doi.org/10.4007/annals.2006.163.969},
}

@incollection {pel1,
    AUTHOR = {Peth\H{o}, A.},
     TITLE = {The {P}ell sequence contains only trivial perfect powers},
 BOOKTITLE = {Sets, graphs and numbers ({B}udapest, 1991)},
    SERIES = {Colloq. Math. Soc. J\'anos Bolyai},
    VOLUME = {60},
     PAGES = {561--568},
 PUBLISHER = {North-Holland, Amsterdam},
      YEAR = {1992},
      ISBN = {0-444-98681-2},
   MRCLASS = {11D61 (11B37)},
MRREVIEWER = {T.\ N.\ Shorey},
}

@article {pel2,
    AUTHOR = {Cohn, J. H. E.},
     TITLE = {Perfect {P}ell powers},
   JOURNAL = {Glasgow Math. J.},
  FJOURNAL = {Glasgow Mathematical Journal},
    VOLUME = {38},
      YEAR = {1996},
    NUMBER = {1},
     PAGES = {19--20},
      ISSN = {0017-0895,1469-509X},
   MRCLASS = {11D61 (11B37)},
  
MRREVIEWER = {Nikos\ Tzanakis},
       DOI = {10.1017/S0017089500031207},
       URL = {https://doi.org/10.1017/S0017089500031207},
}

@article {pel3,
    AUTHOR = {Bravo, Jhon J. and Das, Pranabesh and Guzm\'an, Sergio and
              Laishram, Shanta},
     TITLE = {Powers in products of terms of {P}ell's and {P}ell-{L}ucas
              sequences},
   JOURNAL = {Int. J. Number Theory},
  FJOURNAL = {International Journal of Number Theory},
    VOLUME = {11},
      YEAR = {2015},
    NUMBER = {4},
     PAGES = {1259--1274},
      ISSN = {1793-0421,1793-7310},
   MRCLASS = {11B37 (11D61 15A15)},
  
MRREVIEWER = {Refik\ Keskin},
       DOI = {10.1142/S1793042115500682},
       URL = {https://doi.org/10.1142/S1793042115500682},
}

@article {dj1,
    AUTHOR = {Gaber, Ahmed},
     TITLE = {Intersections of {P}ell, {P}ell-{L}ucas numbers and sums of
              two {J}acobsthal numbers},
   JOURNAL = {Punjab Univ. J. Math. (Lahore)},
  FJOURNAL = {The Punjab University. Journal of Mathematics},
    VOLUME = {55},
      YEAR = {2023},
    NUMBER = {5-6},
     PAGES = {241--252},
      ISSN = {1016-2526},
   MRCLASS = {11B39 (11J86)},

       DOI = {10.1112/blms.12724},
       URL = {https://doi.org/10.1112/blms.12724},
}

@article {dj2,
    AUTHOR = {Gaber, Ahmed and Ahmed, Mohiedeen},
     TITLE = {Solutions of the {D}iophantine equations {$B_r = J_s + J_t$}
              and {$C_r = J_s + J_t$}},
   JOURNAL = {J. Math.},
  FJOURNAL = {Journal of Mathematics},
      YEAR = {2023},
     PAGES = {Art. ID 8851478, 8},
      ISSN = {2314-4629,2314-4785},
   MRCLASS = {11D61 (11B39 11J86)},
 
       DOI = {10.1155/2023/8851478},
       URL = {https://doi.org/10.1155/2023/8851478},
}

@article {pj1,
    AUTHOR = {Nelsen, Roger B.},
     TITLE = {An introduction to the {J}acobsthal numbers},
   JOURNAL = {College Math. J.},
  FJOURNAL = {The College Mathematics Journal},
    VOLUME = {55},
      YEAR = {2024},
    NUMBER = {2},
     PAGES = {153--158},
      ISSN = {0746-8342,1931-1346},
   MRCLASS = {11B39},
  
       DOI = {10.1080/07468342.2023.2282201},
       URL = {https://doi.org/10.1080/07468342.2023.2282201},
}

@article {pj2,
    AUTHOR = {\c{C}elik, Song\"ul and Durukan, \.Inan and \"Ozkan, Engin},
     TITLE = {New recurrences on {P}ell numbers, {P}ell-{L}ucas numbers,
              {J}acobsthal numbers, and {J}acobsthal-{L}ucas numbers},
   JOURNAL = {Chaos Solitons Fractals},
  FJOURNAL = {Chaos, Solitons \& Fractals},
    VOLUME = {150},
      YEAR = {2021},
     PAGES = {Paper No. 111173, 8},
      ISSN = {0960-0779,1873-2887},
   MRCLASS = {11B39},
  
       DOI = {10.1016/j.chaos.2021.111173},
       URL = {https://doi.org/10.1016/j.chaos.2021.111173},
}

@article {aj1,
    AUTHOR = {Kulo\u{g}lu, Bahar and \"Ozkan, Engin},
     TITLE = {Applications of {J}acobsthal and {J}acobsthal-{L}ucas numbers
              in coding theory},
   JOURNAL = {Math. Montisnigri},
  FJOURNAL = {Mathematica Montisnigri. Matematika Tsrne Gore},
    VOLUME = {57},
      YEAR = {2023},
     PAGES = {54--64},
      ISSN = {0354-2238,2704-4963},
   MRCLASS = {94B35 (11B39)},
  
       DOI = {10.20948/mathmontis-2023-57-4},
       URL = {https://doi.org/10.20948/mathmontis-2023-57-4},
}

@article {aj2,
    AUTHOR = {\"Otele\c{s}, Ahmet and Karatas, Zekeriya Y. and Zangana, Diyar
              O. Mustafa},
     TITLE = {Jacobsthal numbers and associated {H}essenberg matrices},
   JOURNAL = {J. Integer Seq.},
  FJOURNAL = {Journal of Integer Sequences},
    VOLUME = {21},
      YEAR = {2018},
    NUMBER = {2},
     PAGES = {Art. 18.2.5, 10},
      ISSN = {1530-7638},
   MRCLASS = {05A15 (11B83 15B99)},
 
MRREVIEWER = {Radica\ Boji\v ci\'c},
}

@article {aj3,
    AUTHOR = {Bruhn, Henning and Gellert, Laura and G\"unther, Jacob},
     TITLE = {Jacobsthal numbers in generalized {P}etersen graphs},
   JOURNAL = {J. Graph Theory},
  FJOURNAL = {Journal of Graph Theory},
    VOLUME = {84},
      YEAR = {2017},
    NUMBER = {2},
     PAGES = {146--157},
      ISSN = {0364-9024,1097-0118},
   MRCLASS = {05C70 (05C15 05C30)},
  
MRREVIEWER = {Ioan\ Tomescu},
       DOI = {10.1002/jgt.22017},
       URL = {https://doi.org/10.1002/jgt.22017},
}

@article {gj1,
    AUTHOR = {Br\'od, Dorota and Michalski, Adrian},
     TITLE = {On generalized {J}acobsthal and {J}acobsthal-{L}ucas numbers},
   JOURNAL = {Ann. Math. Sil.},
  FJOURNAL = {Annales Mathematicae Silesianae},
    VOLUME = {36},
      YEAR = {2022},
    NUMBER = {2},
     PAGES = {115--128},
      ISSN = {0860-2107,2391-4238},
   MRCLASS = {11B39},
  
       DOI = {10.2478/amsil-2022-0011},
       URL = {https://doi.org/10.2478/amsil-2022-0011},
}

@article {gj2,
    AUTHOR = {Babada\u{g}, Faik},
     TITLE = {A new approach to {J}acobsthal, {J}acobsthal-{L}ucas numbers
              and dual vectors},
   JOURNAL = {AIMS Math.},
  FJOURNAL = {AIMS Mathematics},
    VOLUME = {8},
      YEAR = {2023},
    NUMBER = {8},
     PAGES = {18596--18606},
      ISSN = {2473-6988},
   MRCLASS = {11B39},
  
       DOI = {10.3934/math.2023946},
       URL = {https://doi.org/10.3934/math.2023946},
}

@article {gj3,
    AUTHOR = {Al-Kateeb, Ala'a},
     TITLE = {A generalization of {J}acobsthal and {J}acobsthal-{L}ucas
              numbers},
   JOURNAL = {Jordan J. Math. Stat.},
  FJOURNAL = {Jordan Journal of Mathematics and Statistics},
    VOLUME = {14},
      YEAR = {2021},
    NUMBER = {3},
     PAGES = {467--481},
      ISSN = {2075-7905,2227-5487},
   MRCLASS = {11B39},
 
}

@article {wil,
    AUTHOR = {Wiles, Andrew},
     TITLE = {Modular elliptic curves and {F}ermat's last theorem},
   JOURNAL = {Ann. of Math. (2)},
  FJOURNAL = {Annals of Mathematics. Second Series},
    VOLUME = {141},
      YEAR = {1995},
    NUMBER = {3},
     PAGES = {443--551},
      ISSN = {0003-486X,1939-8980},
   MRCLASS = {11G05 (11D41 11F11 11F80 11G18)},
  
MRREVIEWER = {Karl\ Rubin},
       DOI = {10.2307/2118559},
       URL = {https://doi.org/10.2307/2118559},
}

@article {rib,
    AUTHOR = {Ribet, K. A.},
     TITLE = {On modular representations of {${\rm Gal}(\overline{\bf
              Q}/{\bf Q})$} arising from modular forms},
   JOURNAL = {Invent. Math.},
  FJOURNAL = {Inventiones Mathematicae},
    VOLUME = {100},
      YEAR = {1990},
    NUMBER = {2},
     PAGES = {431--476},
      ISSN = {0020-9910,1432-1297},
   MRCLASS = {11G18 (11F32 11F80 11S37)},
  
MRREVIEWER = {Glenn\ Stevens},
       DOI = {10.1007/BF01231195},
       URL = {https://doi.org/10.1007/BF01231195},
}

@article {maz,
    AUTHOR = {Mazur, B.},
     TITLE = {Rational isogenies of prime degree (with an appendix by {D}.
              {G}oldfeld)},
   JOURNAL = {Invent. Math.},
  FJOURNAL = {Inventiones Mathematicae},
    VOLUME = {44},
      YEAR = {1978},
    NUMBER = {2},
     PAGES = {129--162},
      ISSN = {0020-9910,1432-1297},
   MRCLASS = {14K07 (10D35 14G25)},
 
MRREVIEWER = {V.\ V.\ Shokurov},
       DOI = {10.1007/BF01390348},
       URL = {https://doi.org/10.1007/BF01390348},
}

@article {tay,
    AUTHOR = {Breuil, Christophe and Conrad, Brian and Diamond, Fred and
              Taylor, Richard},
     TITLE = {On the modularity of elliptic curves over {$\bold Q$}: wild
              3-adic exercises},
   JOURNAL = {J. Amer. Math. Soc.},
  FJOURNAL = {Journal of the American Mathematical Society},
    VOLUME = {14},
      YEAR = {2001},
    NUMBER = {4},
     PAGES = {843--939},
      ISSN = {0894-0347,1088-6834},
   MRCLASS = {11G05 (11F80 11G07 14G35)},
  
MRREVIEWER = {Karl\ Rubin},
       DOI = {10.1090/S0894-0347-01-00370-8},
       URL = {https://doi.org/10.1090/S0894-0347-01-00370-8},
}

@article {kub,
    AUTHOR = {Kubert, Daniel Sion},
     TITLE = {Universal bounds on the torsion of elliptic curves},
   JOURNAL = {Proc. London Math. Soc. (3)},
  FJOURNAL = {Proceedings of the London Mathematical Society. Third Series},
    VOLUME = {33},
      YEAR = {1976},
    NUMBER = {2},
     PAGES = {193--237},
      ISSN = {0024-6115,1460-244X},
   MRCLASS = {10B15 (14G25)},
  
MRREVIEWER = {J.\ W. S. Cassels},
       DOI = {10.1112/plms/s3-33.2.193},
       URL = {https://doi.org/10.1112/plms/s3-33.2.193},
}

@article {dar,
    AUTHOR = {Darmon, Henri and Merel, Lo\"ic},
     TITLE = {Winding quotients and some variants of {F}ermat's last
              theorem},
   JOURNAL = {J. Reine Angew. Math.},
  FJOURNAL = {Journal f\"ur die Reine und Angewandte Mathematik. [Crelle's
              Journal]},
    VOLUME = {490},
      YEAR = {1997},
     PAGES = {81--100},
      ISSN = {0075-4102,1435-5345},
   MRCLASS = {11G18 (11D41 11F80 11G05)},
  
MRREVIEWER = {Kenneth\ Kramer},
       DOI = {10.1515/crll.1997.490.81},
       URL = {https://doi.org/10.1515/crll.1997.490.81},
}

@article {f1,
    AUTHOR = {Irmak, Nurettin and Szalay, L\'aszl\'o},
     TITLE = {On the equation {$F_n-F_m=F_t^a$}},
   JOURNAL = {Bol. Soc. Mat. Mex. (3)},
  FJOURNAL = {Bolet\'in de la Sociedad Matem\'atica Mexicana. Third Series},
    VOLUME = {31},
      YEAR = {2025},
    NUMBER = {3},
     PAGES = {Paper No. 105, 12},
      ISSN = {1405-213X,2296-4495},
   MRCLASS = {11J86 (11B39 11D61)},
  
       DOI = {10.1007/s40590-025-00776-y},
       URL = {https://doi.org/10.1007/s40590-025-00776-y},
}

@article {f2,
    AUTHOR = {Hasanalizade, Elchin},
     TITLE = {Sums of {F}ibonacci numbers close to a power of 2},
   JOURNAL = {Bol. Soc. Mat. Mex. (3)},
  FJOURNAL = {Bolet\'in de la Sociedad Matem\'atica Mexicana. Third Series},
    VOLUME = {29},
      YEAR = {2023},
    NUMBER = {1},
     PAGES = {Paper No. 19, 11},
      ISSN = {1405-213X,2296-4495},
   MRCLASS = {11B39},
       DOI = {10.1007/s40590-023-00490-7},
       URL = {https://doi.org/10.1007/s40590-023-00490-7},
}

@article {f3,
    AUTHOR = {\c{S}iar, Zafer},
     TITLE = {On the exponential {D}iophantine equation {$F^x_n \pm F^x_m =
              a$} with {$a\in \{F_r, L_r\}$}},
   JOURNAL = {Int. J. Number Theory},
  FJOURNAL = {International Journal of Number Theory},
    VOLUME = {19},
      YEAR = {2023},
    NUMBER = {1},
     PAGES = {41--57},
      ISSN = {1793-0421,1793-7310},
   MRCLASS = {11B39 (11D61 11J86)},
MRREVIEWER = {Gopal\ Krishna\ Panda},
       DOI = {10.1142/S1793042123500021},
       URL = {https://doi.org/10.1142/S1793042123500021},
}

@article {f4,
    AUTHOR = {Tripathy, Bibhu Prasad and Patel, Bijan Kumar},
     TITLE = {Sums of three {F}ibonacci numbers close to a power of 2},
   JOURNAL = {Integers},
  FJOURNAL = {Integers. Electronic Journal of Combinatorial Number Theory},
    VOLUME = {23},
      YEAR = {2023},
     PAGES = {Paper No. A4, 13},
      ISSN = {1553-1732},
   MRCLASS = {11B39 (11J86)},

MRREVIEWER = {Sai\ Gopal\ Rayaguru},
}

@article {f5,
    AUTHOR = {Rihane, Salah Eddine},
     TITLE = {On the {D}iophantine equations {$F_N = P_M \pm P_\ell$} and
              {$P_N = F_M \pm F_\ell$} involving {F}ibonacci and {P}ell
              numbers},
   JOURNAL = {Integers},
  FJOURNAL = {Integers. Electronic Journal of Combinatorial Number Theory},
    VOLUME = {22},
      YEAR = {2022},
     PAGES = {Paper No. A100, 12},
      ISSN = {1553-1732},
   MRCLASS = {11B39 (11J86)},
 
MRREVIEWER = {Florian\ Luca},
}

@article {f6,
    AUTHOR = {Luca, Florian and Tchammou, Euloge and Togb\'e, Alain},
     TITLE = {On the exponential {D}iophantine equation
              {$F_n^x+F_{n+1}^x+\cdots+F_{n+k-1}^x=F_m$}},
   JOURNAL = {Funct. Approx. Comment. Math.},
  FJOURNAL = {Uniwersytet im. Adama Mickiewicza w Poznaniu. Wydzia\l\
              Matematyki i Informatyki. Functiones et Approximatio
              Commentarii Mathematici},
    VOLUME = {66},
      YEAR = {2022},
    NUMBER = {2},
     PAGES = {139--159},
      ISSN = {0208-6573,2080-9433},
   MRCLASS = {11B39 (11J86)},
 
MRREVIEWER = {Pagdame\ Tiebekabe},
       DOI = {10.7169/facm/1860},
       URL = {https://doi.org/10.7169/facm/1860},
}

@article {f7,
    AUTHOR = {Kihel, Omar and Larone, Jesse},
     TITLE = {On the nonnegative integer solutions of the equation {$F_n\pm
              F_m=y^a$}},
   JOURNAL = {Quaest. Math.},
  FJOURNAL = {Quaestiones Mathematicae. Journal of the South African
              Mathematical Society},
    VOLUME = {44},
      YEAR = {2021},
    NUMBER = {8},
     PAGES = {1133--1139},
      ISSN = {1607-3606,1727-933X},
   MRCLASS = {11B39 (11D61 11J86)},
 
MRREVIEWER = {Mahadi\ Ddamulira},
       DOI = {10.2989/16073606.2020.1775155},
       URL = {https://doi.org/10.2989/16073606.2020.1775155},
}

@article {f8,
    AUTHOR = {Patel, Bijan Kumar and Chaves, Ana Paula},
     TITLE = {On the exponential {D}iophantine equation
              {$F_{n+1}^x-F_{n-1}^x=F_m$}},
   JOURNAL = {Mediterr. J. Math.},
  FJOURNAL = {Mediterranean Journal of Mathematics},
    VOLUME = {18},
      YEAR = {2021},
    NUMBER = {5},
     PAGES = {Paper No. 187, 11},
      ISSN = {1660-5446,1660-5454},
   MRCLASS = {11B39 (11D61 11J86)},
 
MRREVIEWER = {Nurettin\ Irmak},
       DOI = {10.1007/s00009-021-01831-4},
       URL = {https://doi.org/10.1007/s00009-021-01831-4},
}

@article {f9,
    AUTHOR = {B\'erczes, Attila and Hajdu, Lajos and Luca, Florian and Pink,
              Istv\'an},
     TITLE = {On the {D}iophantine equation {$F_n{}^x +F_k{}^ x =F_m{}^y$}},
   JOURNAL = {Ramanujan J.},
  FJOURNAL = {Ramanujan Journal. An International Journal Devoted to the
              Areas of Mathematics Influenced by Ramanujan},
    VOLUME = {67},
      YEAR = {2025},
    NUMBER = {2},
     PAGES = {Paper No. 29, 37},
      ISSN = {1382-4090,1572-9303},
   MRCLASS = {11B39},
  
MRREVIEWER = {Devbhadra\ V.\ Shah},
       DOI = {10.1007/s11139-025-01075-w},
       URL = {https://doi.org/10.1007/s11139-025-01075-w},
}

@article {f10,
    AUTHOR = {Zhang, Zhongfeng and Togb\'e, Alain},
     TITLE = {On the {D}iophantine equation
              {$F_{n-2}^m+F_n^m+F_{n+2}^m=y^p$}},
   JOURNAL = {Funct. Approx. Comment. Math.},
  FJOURNAL = {Uniwersytet im. Adama Mickiewicza w Poznaniu. Wydzia\l\
              Matematyki i Informatyki. Functiones et Approximatio
              Commentarii Mathematici},
    VOLUME = {71},
      YEAR = {2024},
    NUMBER = {2},
     PAGES = {229--235},
      ISSN = {0208-6573,2080-9433},
   MRCLASS = {11D61 (11B39)},
 
MRREVIEWER = {L\'aszl\'o\ Szalay},
       DOI = {10.7169/facm/2130},
       URL = {https://doi.org/10.7169/facm/2130},
}

\end{document}